\newtheorem{theorem}{Theorem}[section]
\newtheorem{corollary}[theorem]{Corollary}
\newtheorem{proposition}[theorem]{Proposition}
\newtheorem{lemma}[theorem]{Lemma}
\theoremstyle{definition}
\newtheorem{definition}[theorem]{Definition}
\newtheorem{example}[theorem]{Example}
\DeclareMathOperator*{\argmax}{argmax}
\DeclareMathOperator*{\pr}{pr}
\newcommand{\UU}{\mathcal U}
\newcommand{\CC}{\mathcal C}
\newcommand{\R}{\mathbb R}
\newcommand{\vC}{{\v{C}}}
\newcommand{\cech}{{\v{C}}ech }
\newcommand{\field}{\mathbb F}
\begin{document}

\title{Divisive cover}
\author[1]{Blaser, Nello \thanks{nello.blaser@uib.no}}
\author[1]{Brun, Morten \thanks{morten.brun@uib.no}}
\affil[1]{Department of Mathematics, University of Bergen}

\maketitle

\begin{abstract}

The aim of this paper is to present a method for computing
persistent homology that performs well at large filtration values. 
To this end we introduce the concept of filtered covers. 
Given
a parameter \(\delta\) with \(0 < \delta \le 1\) we introduce the
concept of a $\delta$-filtered cover and show that its filtered nerve
is interleaved with the {\v{C}}ech complex. 
We introduce a
particular $\delta$-filtered cover, the divisive cover. The special
feature of the divisive cover is that it is constructed top-down. If
we disregard fine scale structure and \(X\) is a finite subspace of
Euclidean space, then we obtain
a filtered simplicial complex whose size makes computing
persistent homology feasible.
\end{abstract}

\section{Introduction}

The concept of persistent homology was introduced in  \cite{Edelsbrunner2000} and has since been used in a wide
range of applications. The persistent homology of a finite metric
space $X$ can be approached by using several different constructions
of filtered simplicial complexes, such as the {\v{C}}ech complex,
Vietoris-Rips complex or witness complex. Several approximations
of the Vietoris-Rips complex have recently been proposed to speed up
calculations \cite{Sheehy2013,MR3394707,simba}. 


In this paper we construct a new approximation to the {\v{C}}ech
complex computing persistent homology down to a
predefined threshold that can be chosen arbitrarily. The
complexity of our algorithm grows with the
ratio between the radius of \(X\) and the threshold. We also present a
version with theoretical guarantees on size and time. 
If \(X\) is a subset of \(d\)-dimensional Euclidean space then the
size of our approximation is bounded by an upper bound that is
independent of the
cardinality \(n\) of \(X\) and the required computation time 
is linear in \(n\).
However the constants are so big that this is no improvement
in practice. 

The method presented here is fundamentally different from existing
algorithms for persistent homology.  Most approximations to the
Vietoris-Rips complex are fundamentally bottom-up \cite{Hudson,
  Sheehy2013, MR3394707}, whereas our approach is top-down.

We introduce the notion of filtered nerve of a filtered cover and we
give an example of a filtered cover whose filtered nerve has a
filtered chain complex which is computationally tractable. Moreover we
show that the resulting nerve is interleaved with the {\v{C}}ech nerve
in a multiplicative sense, similar to the Vietoris-Rips complex. In
Section~\ref{sec:filtered} we introduce the notion of filtered and
$\delta$-filtered covers and show that \(\delta\)-filtered covers are
interleaved with the {\v{C}}ech filtration. Section~\ref{sec:divisive}
introduces divisive covers, a particular class of $\delta$-filtered
covers. Complexity estimates for divisive covers are presented in
Section~\ref{sec:complexity}. In Section~\ref{sec:examples} we show
how divisive covers can be applied to synthetic and to real world data
sets and in Section~\ref{sec:conclusion} we discuss our results.


\section{Filtered covers} \label{sec:filtered}
We introduce the notions of a filtered and $\delta$-filtered cover of
a bounded metric space. Throughout this section \(X = (X,d)\) will be
a fixed but arbitrary bounded metric space. First recall the definition of a cover. 
\begin{definition}
  A {\em cover} of \(X\) is a set \(\UU\) of subsets of \(X\) such that every point in \(X\) is contained in a member of \(\UU\).
\end{definition}
Recall that a {\em simplicial complex} \(K\) consists of a vertex set \(V\) and a set \(K\) of subsets of \(V\) with the property that if \(\sigma\) is a member of \(K\) and if \(\tau\) is a subset of \(\sigma\), then \(\tau\) is a member of \(K\). Also recall that every simplicial complex \(K\) has an underlying topological space \(|K|\). The book \cite{MR2766102} may serve as gentle introduction and reference to abstract simplicial complexes.
\begin{definition}
  Let \(\UU\) be a cover of \(X\). The {\em nerve} \(N(\UU)\) of
  \(\UU\) is the simplicial complex with vertex set \(\UU\) defined as
  follows: A finite subset \(\sigma = \{U_0,\dots,U_n\}\) of \(\UU\)
  is a member of \(N(\UU)\) if and only if the intersection of \(U_0
  \cap \dots \cap U_n\) is non-empty. 
\end{definition}

Note that the nerve construction \(\UU \mapsto N(\UU)\) is functorial 
in the sense
that if \(\UU \subseteq \mathcal V\) is an inclusion of covers of
\(X\), then we have an induced inclusion \(N(\UU) \subseteq N(\mathcal
V)\) of nevers.
 
If \(B \subseteq A\) is an inclusion of partially ordered sets, 
we say that \(B\) is {\em cofinal} in \(A\) if for every \(a \in A\),
there exists \(b \in B\) so that \(a \le b\). Given a cover \(\UU\),
we consider it as a partially ordered set with partial order given by
inclusion. We will need the
following result several times: 
\begin{lemma}\label{cofinality}
  If \(\UU \subseteq \mathcal V\) are covers of \(X\) and if \(\UU\)
  is cofinal in \({\mathcal V}\), then the geometric
  realization of the inclusion \(N(\UU) \subseteq N(\mathcal V)\) is a
  homotopy equivalence. 
\end{lemma}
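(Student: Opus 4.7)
The plan is to construct an explicit simplicial retraction \(r : N(\mathcal V) \to N(\UU)\) and show that its composition with the inclusion \(i : N(\UU) \to N(\mathcal V)\) is contiguous to the identity on \(N(\mathcal V)\). By the standard fact that contiguous simplicial maps have homotopic geometric realizations, this will exhibit \(|i|\) as a homotopy equivalence.

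To set up \(r\), I use cofinality: for each \(V \in \mathcal V\) I pick some \(r(V) \in \UU\) with \(V \subseteq r(V)\), and I arrange that \(r(U) = U\) for every \(U \in \UU\) (permissible since \(U \subseteq U\)). Two verifications are then routine. Simpliciality: if \(\sigma = \{V_0, \dots, V_n\}\) is a simplex of \(N(\mathcal V)\), then \(V_0 \cap \dots \cap V_n \neq \emptyset\), and since each \(V_i \subseteq r(V_i)\) this nonempty set is contained in \(r(V_0) \cap \dots \cap r(V_n)\); hence \(\{r(V_0), \dots, r(V_n)\}\), with duplicates removed, is a simplex of \(N(\UU)\). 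Retraction: \(r \circ i = \id_{N(\UU)}\) by construction.

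The main step is contiguity of \(i \circ r\) and \(\id_{N(\mathcal V)}\): for every simplex \(\sigma = \{V_0, \dots, V_n\}\) of \(N(\mathcal V)\) I must show that \(\{V_0, \dots, V_n, r(V_0), \dots, r(V_n)\}\) again spans a simplex. This follows from the same containment, because \(V_0 \cap \dots \cap V_n\) already sits inside every \(r(V_j)\); consequently the joint intersection equals \(V_0 \cap \dots \cap V_n\), which is nonempty. Putting the pieces together yields \(|i| \circ |r| \simeq \id_{|N(\mathcal V)|}\) and \(|r| \circ |i| = \id_{|N(\UU)|}\), so \(|i|\) is a homotopy equivalence.

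The only genuine obstacle is recognising that cofinality is exactly the hypothesis needed to force the ``enveloping'' containment \(\bigcap V_i \subseteq r(V_j)\); once that is observed, both simpliciality and contiguity are automatic, and no finer geometric input (such as the full nerve theorem for a good cover) is required.
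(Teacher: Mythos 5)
Your proof is correct and follows essentially the same route as the paper: choose a selection function $V \mapsto r(V) \in \UU$ with $V \subseteq r(V)$ using cofinality, check it induces a simplicial map, and use contiguity (via $\bigcap_j V_j \subseteq \bigcap_j r(V_j)$) to conclude the realizations are homotopy inverse. The only cosmetic difference is that you normalize $r(U)=U$ on $\UU$ so that $r \circ i$ is the identity on the nose, whereas the paper handles both composites by contiguity; both are fine.
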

\begin{proof}
  Since \(\UU\) is cofinal in \(\mathcal V\) there exists a map \(f
  \colon \mathcal V \to \UU\) such that \(V \subseteq 
  f(V)\) for all \(V \in \mathcal V\). Note that the formula
  \(Nf(\{V_0,\dots,V_n\}) = \{f(V_0),\dots, f(V_n)\}\) defines a
  simplicial map \(Nf \colon N(\mathcal V) \to N(\UU)\). Similarly,
  the inclusion \(i \colon \UU \to \mathcal V\) induces a simplicial
  map \(Ni \colon N(\UU) \to N(\mathcal V) \). 

  Since \(V \subseteq f(V)\) for all \(V \in \mathcal V\), the
  composite \(Nf \circ Ni\) is contiguous with the identity map on
  \(N(\UU)\) in the sense that for every face \(\sigma\) of \(N(\UU)\),
  the set \(\sigma \cup (Nf \circ Ni(\sigma))\) is a face of
  \(N(\UU)\). It follows that the geometric realization of \(Nf \circ
  Ni\) is homotopic to the 
  identity on the geometric realization of \(N(\UU)\) \cite[Lemma 2
  p. 130]{Spanier}. Similarly the geometric realization of 
  \(Ni \circ N f\) is homotopic to the identity on the geometric
  realization of \(N(\mathcal V)\).

\end{proof}


We are now ready to define and establish some basic properties of filtered bases and filtered nerves.
\begin{definition}
  A {\em filtered basis of \(X\)} is a basis \(\UU\) for the metric topology on \(X\) with the property that \(X\) is a member of \(\UU\).
  Given \(t > 0\) we write \(\UU_t\) for the cover of \(X\)
  consisting of members of \(\UU\) contained in an open ball in \(X\) of radius \(t\).
\end{definition}


\begin{definition}
  Let \(\UU\) be a filtered basis of \(X\). The {\em filtered nerve} of \(\UU\) is the collection \(\{N(\UU_t)\}_{t >0}\) together with the inclusions \(N(\UU_s) \subseteq N(\UU_t)\) induced by the inclusions \(\UU_s \subseteq \UU_t\). 
\end{definition}
Since \(X\) is bounded there exists \(T > 0\) so that \(N(\UU_t) = N(\UU)\) for \(t \ge T\).

\begin{definition}\label{def:delta-filtered-cover}
  Let \(\UU\) be a filtered basis of \(X\) and let \(\delta\) be a parameter satisfying \(0 < \delta \le 1\). We say that \(\UU\) is a \(\delta\)-filtered basis of \(X\) if for every \(x \in X\) and every \(r > 0\), there exists a member \(A\) of \(\UU_r\) containing \(B(x,\delta r)\).
\end{definition}
\begin{example}\label{absolute {\v{C}}ech cover}
  The {\v{C}}ech cover \(\CC = \check\CC(X)\) 
consisting of all balls in \(X\) is
\(1\)-filtered. 
\end{example}
\begin{example}
  Let \(0 < \delta < 1\) and choose \(x \in X\) and \(R > 0\) so that
  \(X\) is contained in the open ball \(B(x,R)\). We claim that the
  subset \(\UU = \check\CC(X,\delta)\) of the \vC ech cover
  \(\check\CC(X)\) consisting of balls of radius \(\delta^k R\),
  where \(k\) is a nonnegative integer, is a \(\delta\)-filtered basis of
  \(X\). Indeed, 
  let \(k\) be the nonnegative integer with \(\delta^{k+1}R \le
  r \le \delta^{k} R\). Since \(\delta r \le \delta^{k+1} R\), the set
  \(B(p,\delta r)\) is contained in the member
  \(B(p,\delta^{k+1} R)\) of \(\UU_{\delta^{k+1} R}\) for
  every \(p \in X\). We can
  finish the argument by noting that since \(\delta^{k+1} R \le r\)
  the cover \(\UU_{\delta^{k+1} R}\) is a subcover of \(\UU_r\).  
\end{example}

We now introduce some notation regarding persistent homology. For the rest of this section \(\field\) will denote a fixed but arbitrary field. 

A {\em persistence module} \(V = (V_t)_{t > 0}\) consists of a
\(\field\)-vector space \(V_t\) for each positive real number \(t\) together with homomorphisms
\begin{displaymath}
  V_{s < t} \colon V_s \to V_t 
\end{displaymath}
for \(s < t\). These homomorphisms are subject to the condition that \(V_{s < t} \circ V_{r < s} = V_{r < t}\) whenever \(r < s < t\). Given \(\lambda_1, \lambda_2 \ge 1\), two persistence modules \(V\) and \(W\) are {\em multiplicatively \((\lambda_1,\lambda_2)\)-interleaved} if there exist \(\field\)-linear maps \(f_t \colon V_{t} \to W_{\lambda_1 t}\) and \(g_t \colon W_t \to V_{\lambda_2 t}\) for all real numbers \(t\) such that for all \(s < t\) the following relations hold
\begin{align*}
  f_{\lambda_2 t} \circ g_t &= W_{t < \lambda_1\lambda_2 t}, \\
  g_{\lambda_1 t} \circ f_t &= V_{t < \lambda_1\lambda_2 t}, \\
  g_{t} \circ W_{s < t} &= V_{\lambda_2 s < \lambda_2 t} \circ g_s \qquad \text{and} \\
  f_{t} \circ V_{s < t} &= W_{\lambda_1 s < \lambda_1 t} \circ f_s .
\end{align*}

Given a simplicial complex \(K\), we write \(H_*(K)\) for the homology
of \(K\) with coefficients in the field \(\field\).

The following example justifies working with the intrisic \cech complex
instead of the relative \cech complex.
\begin{example}\label{relative {\v{C}}ech cover}
  Let \(X\) be a subspace of a metric space \(M\), let \(\CC(X)\) be
  the filtered basis from Example \ref{absolute {\v{C}}ech
    cover}. 
  Let \(\CC(X,M)\) be the relative \cech cover
  consisting of balls in \(M\) with center in \(X\), that
  is, with
  \(\CC(X,M)_t\) consisting of balls in \(M\) with center in \(X\) of
  radius at most \(t\).

  The homology of the intrinsic \cech chain complex \(\CC_*(X)_t\) consisting of
  linear combinations of subsets \(\sigma \subseteq X\) with the
  property that \(\sigma \subseteq B(x,t)\) for some \(x \in X\) is
  isomorphic to the homology of \(N(\CC(X)_t)\).
  Similarly, the homology of 
  the ambient \cech chain complex \(\CC_*(X,M)_t\) consisting of
  linear combinations of subsets \(\sigma \subseteq X\) with the
  property that \(\sigma \subseteq B(p,t)\) for some \(p \in M\) is
  isomorphic to the homology of \(N(\CC(X,M)_t)\).
  By construction \(\CC_*(X)_t \subseteq
  \CC_*(X,M)_t\), and by the triangle inequality
  \(\CC_*(X,M)_t \subseteq
  \CC_*(X)_{2t}\). Thus, the
  persistent homology of \(N(\CC(X))\) is \((1,2)\)-interleaved with the
  persistent homology of \(N(\CC(X,M))\).

  By the Nerve Theorem \cite[Corollary 4G.3]{MR1867354},
  if all non-empty intersections of balls in \(M\) are contractible, the
  geometric realization of the nerve \(N(\CC(X,M)_t)\) of the cover
  \(\CC(X,M)_t\), consisting of balls in \(M\) with center in \(X\) of
  radius at most \(t\), 
  is homotopy equivalent to the
  union of all balls in \(M\) of radius \(t\) with center in \(X\). This
  is the interior of the \(t\)-thickening of 
  \(X\) in \(M\). 
\end{example}

\begin{theorem}[Relationship between $\delta$-filtered basis and {\v{C}}ech complex] \label{thm:cech}
  Let \(\CC\) be the \cech cover from Example \ref{absolute
    {\v{C}}ech cover}, let \(\UU\) be a \(\delta\)-filtered basis
  of \(X\) and \(N(\CC)\) and \(N(\UU)\) be their filtered
  nerves. Then the persistent homology of \(N(\UU)\) is
  multiplicatively \((1,1/\delta)\)-interleaved with the persistent
  homology of \(N(\CC)\). 
\end{theorem}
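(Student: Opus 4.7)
The plan is to construct the interleaving maps at the simplicial level from two explicit choice functions and then verify the four interleaving relations by contiguity arguments in the spirit of Lemma~\ref{cofinality}. Write $V = H_*(N(\UU))$ and $W = H_*(N(\CC))$ for the two persistence modules to be interleaved.

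I would first construct $f_t \colon V_t \to W_t$. Every $A \in \UU_t$ is by definition contained in some open ball $B_A$ of radius $t$, and $B_A$ is a vertex of $\CC_t$. Fixing one such choice $\phi_t(A) := B_A$ gives a vertex map $\UU_t \to \CC_t$ satisfying $A \subseteq \phi_t(A)$, which extends to a simplicial map $N(\phi_t) \colon N(\UU_t) \to N(\CC_t)$ because every non-empty intersection $\bigcap A_i$ survives as a non-empty intersection $\bigcap \phi_t(A_i)$. Let $f_t$ be the induced map on homology. Symmetrically, for any $B \in \CC_t$ pick an open ball $B(y,t)$ of radius $t$ with $B \subseteq B(y,t)$; the $\delta$-filtered property applied with $r = t/\delta$ then yields $A_B \in \UU_{t/\delta}$ containing $B(y,\delta \cdot t/\delta) = B(y,t) \supseteq B$. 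Setting $\psi_t(B) := A_B$ gives a simplicial map $N(\CC_t) \to N(\UU_{t/\delta})$; let $g_t$ be the induced map on homology.

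Next I would verify the four interleaving relations. The uniform tool is the contiguity principle already used in the proof of Lemma~\ref{cofinality}: if two vertex maps $F, G$ from $N(\mathcal A)$ to a common $N(\mathcal B)$ satisfy that $F(A_i)$ and $G(A_i)$ all contain a common point of any non-empty $\bigcap A_i$, then for every simplex $\{A_0,\ldots,A_n\}$ of $N(\mathcal A)$ the combined set $\{F(A_0),\ldots,F(A_n),G(A_0),\ldots,G(A_n)\}$ is a simplex of $N(\mathcal B)$, and $F$ and $G$ agree on homology. For $g_t \circ f_t$ versus the inclusion $N(\UU_t) \subseteq N(\UU_{t/\delta})$, both the identity on $\UU_t$ and the composite $\psi_t\phi_t$ send each vertex $A$ to a superset of $A$, so the contiguity condition is immediate. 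The argument for $f_{t/\delta} \circ g_t$ against the inclusion $N(\CC_t) \subseteq N(\CC_{t/\delta})$ is identical, and the two naturality squares are checked by comparing the choices made at two scales $s \le t$ against each other inside the appropriate target nerve.

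The main obstacle is that the choice functions $\phi_t$ and $\psi_t$ need not be compatible across scales: the ball picked to cover $A$ at scale $t$ may differ from the one picked at scale $s \le t$, so naturality does not hold on the nose. The resolution is again the contiguity principle, since any two valid choices produce vertex maps into the same nerve with the same ``contains a common point'' property, and therefore induce the same map on homology. Once this bookkeeping is organised, each of the four interleaving identities reduces to a single application of the contiguity argument used in Lemma~\ref{cofinality}, and the theorem follows.
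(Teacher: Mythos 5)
Your argument is correct, but it is organised quite differently from the paper's. You construct the interleaving maps directly, by choosing for each \(A \in \UU_t\) an enclosing ball of radius \(t\) and, via the \(\delta\)-filtered condition applied at radius \(t/\delta\), for each member of \(\CC_t\) an enclosing member of \(\UU_{t/\delta}\); you then discharge all four interleaving identities, as well as the incompatibility of these choices across scales, by repeated appeal to the contiguity principle that any two vertex maps sending each set to a superset of itself induce the same map on homology. The paper never makes such choices explicitly: it forms the union covers and observes that \(\CC_r\) is cofinal in \(\CC_r \cup \UU_r\) while \(\UU_r\) is cofinal in \(\CC_{\delta r} \cup \UU_r\), so Lemma~\ref{cofinality} converts these inclusions into homology isomorphisms; the interleaving maps are then induced by honest inclusions of union covers, and the four identities follow from functoriality of the nerve with no case-by-case contiguity checks. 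Both arguments rest on the same mechanism --- contiguity of set-enlarging vertex maps, which is exactly the content of the proof of Lemma~\ref{cofinality} --- but the paper packages every choice into a single application of that lemma, whereas you expose the choices and must verify coherence by hand. Your version is more explicit about what the maps do on simplices and makes the role of the \(\delta\)-filtered hypothesis visible (it is used only to define \(\psi_t\), the exact analogue of the cofinality of \(\UU_r\) in \(\CC_{\delta r}\cup\UU_r\)); the paper's version is shorter because functoriality supplies the naturality squares for free.
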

\begin{proof}
  By definition, the partially ordered set \( \CC_r\)
  is cofinal in \( \CC_r \cup \UU_r\)
  and \(\UU_r\) is cofinal in 
  \( \CC_{\delta r} \cup \UU_r\).
  Thus, by Lemma \ref{cofinality}, the homology \(H_*(N(\CC_r))\) is
  isomorphic to the homology \(H_*(N(\CC_r \cup \UU_r))\) and the
  homology \(H_*(N(\UU_r))\) is isomorphic to the homology \(H_*(N(
  \CC_{\delta r} \cup \UU_r ))\). Now the result follows from
  functoriality of the nerve construction by considering the composites
  \begin{displaymath}
    \CC_{\delta r} \cup \UU_r \subseteq \CC_{r} \cup \UU_{r} \subseteq \CC_{r} \cup \UU_{r/\delta}
  \end{displaymath}
  and
  \begin{displaymath}
    \CC_r \cup \UU_{r} \subseteq \CC_r \cup \UU_{r/\delta} \subseteq \CC_{r/\delta} \cup \UU_{r/\delta}. 
  \end{displaymath}
\end{proof}

An easy diagram chase now gives:
\begin{corollary}\label{stabilityOfPersistence}
  If \(t > 0\) can be chosen so that
  in the situation of Theorem \ref{thm:cech}, the \(\field\)-linear maps \(H_*(N(\CC_{\delta t < t}))\) and \(H_*(N(\CC_{t < t/\delta}))\) are both isomorphisms, then \(H_*(N(\CC_t))\) is isomorphic to the image of the homomorphism \(H_*(N(\UU_{t< t/\delta}))\).
\end{corollary}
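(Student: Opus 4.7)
The plan is to extract the explicit interleaving maps promised by Theorem~\ref{thm:cech} and do a short diagram chase. Specializing the definition of $(1, 1/\delta)$-interleaving to this setting, we obtain for every $r > 0$ maps
\[
  f_r : H_*(N(\UU_r)) \to H_*(N(\CC_r)), \qquad
  g_r : H_*(N(\CC_r)) \to H_*(N(\UU_{r/\delta}))
\]
compatible with the persistence structure, and satisfying
\[
  f_{r/\delta} \circ g_r = H_*(N(\CC_{r < r/\delta})), \qquad
  g_r \circ f_r = H_*(N(\UU_{r < r/\delta})).
\]
This is precisely what we need: the map whose image we want to compute is $g_t \circ f_t$.

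I would then feed the two hypothesized isomorphisms into these identities. Taking $r = \delta t$ in the first identity shows that $f_t \circ g_{\delta t} = H_*(N(\CC_{\delta t < t}))$ is an isomorphism, so $f_t$ is surjective and $g_{\delta t}$ is injective. Taking $r = t$ in the same identity shows $f_{t/\delta} \circ g_t = H_*(N(\CC_{t < t/\delta}))$ is an isomorphism, so $g_t$ is injective and $f_{t/\delta}$ is surjective. These are the only consequences of the hypotheses that the proof will use.

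To finish, I would compute the image of $H_*(N(\UU_{t < t/\delta})) = g_t \circ f_t$ directly. Because $f_t$ is surjective, $\mathrm{image}(g_t \circ f_t) = g_t\bigl(H_*(N(\CC_t))\bigr)$, and because $g_t$ is injective, this image is isomorphic to $H_*(N(\CC_t))$ via $g_t$ itself. Combining these two observations gives the stated identification.

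There is no real obstacle here; the only mild subtlety is keeping track of which composition of interleaving maps recovers which internal persistence morphism, so I would be careful to write down the interleaving relations with the correct indices and directions before beginning the chase. Everything else is a one-line surjective/injective argument.
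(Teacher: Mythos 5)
Your proof is correct and is precisely the ``easy diagram chase'' the paper alludes to without writing out: the indices in your interleaving relations are right ($f_t\circ g_{\delta t}=H_*(N(\CC_{\delta t<t}))$ forces $f_t$ surjective, $f_{t/\delta}\circ g_t=H_*(N(\CC_{t<t/\delta}))$ forces $g_t$ injective), and the surjective-then-injective factorization of $g_t\circ f_t=H_*(N(\UU_{t<t/\delta}))$ gives the claimed identification of its image with $H_*(N(\CC_t))$. Nothing is missing.
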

In \cite[Theorem 1]{Chazal2005304} it is shown that if \(X\) is open
in \(M = \R^d\), then the conditions of Corollary
\ref{stabilityOfPersistence} are satisfied when \(2t/\delta\) is
smaller than the weak feature size of \(X\).
In \cite{Chazal2007}
these considerations have been extended to similar results when \(X\)
is a finite subset of a compact subset \(M\) of \(\R^d\). 
Moreover, \cite[Homological Inference Theorem]{MR2460372} shows similar results for the homological feature size of \(X\). 

Next we introduce \(\delta\)-filtered covers, which do not require the cover to be a basis.
\begin{definition}\label{define-delta-filtered-cover}
Let \(\UU\) be a cover of \(X\), and \(\delta\) and \(r\) be parameters satisfying \(0 < \delta \le 1\) and \(r \ge 0\). 
We say that \(\UU\) is a \em{\(\delta\)-filtered cover} of \(X\) of resolution \(r\) if there exists a filtered basis \(\mathcal V\) such that \(\UU_s\) is cofinal in \(\mathcal V_s\) for all \(s \ge r\).
\end{definition}

\begin{corollary}
Let \(X\) be a bounded metric space, \(r \ge 0\) and \(\UU\) and \(\mathcal V\) be as in Definition \ref{define-delta-filtered-cover}. Then the persistent homology of \(N\UU_t\) and the persistent homology of \(N\mathcal V_t\) are isomorphic for \(t \ge r\). 
\end{corollary}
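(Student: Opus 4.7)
The plan is to apply Lemma \ref{cofinality} levelwise and then package the resulting isomorphisms into an isomorphism of persistence modules. By Definition \ref{define-delta-filtered-cover}, for every \(t \ge r\) the inclusion \(\UU_t \subseteq \mathcal V_t\) is a cofinal inclusion of covers, so Lemma \ref{cofinality} tells us that the induced simplicial inclusion \(N(\UU_t) \hookrightarrow N(\mathcal V_t)\) is a homotopy equivalence. Passing to homology with coefficients in \(\field\), we obtain an isomorphism \(H_*(N(\UU_t)) \to H_*(N(\mathcal V_t))\) for each such \(t\).

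To upgrade this to an isomorphism of persistence modules, I would verify naturality with respect to the filtration maps. For any \(r \le s \le t\), the two composite inclusions of covers \(\UU_s \subseteq \UU_t \subseteq \mathcal V_t\) and \(\UU_s \subseteq \mathcal V_s \subseteq \mathcal V_t\) coincide. Functoriality of the nerve construction together with functoriality of homology then produce a commutative square
\[
\begin{array}{ccc}
H_*(N(\UU_s)) & \longrightarrow & H_*(N(\UU_t)) \\
\downarrow & & \downarrow \\
H_*(N(\mathcal V_s)) & \longrightarrow & H_*(N(\mathcal V_t))
\end{array}
\]
whose vertical arrows are the isomorphisms obtained in the previous step. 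This exhibits the persistence modules \(\{H_*(N(\UU_t))\}_{t \ge r}\) and \(\{H_*(N(\mathcal V_t))\}_{t \ge r}\) as isomorphic, which is what the corollary asserts.

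The argument presents no real obstacle, since the hypothesis of Definition \ref{define-delta-filtered-cover} is tailored precisely to feed into Lemma \ref{cofinality}. The one point worth flagging is that the homotopy equivalence produced by that lemma is the geometric realization of the subcomplex inclusion itself; this is what makes naturality in \(t\) automatic, rather than requiring a separate verification that the cofinality retractions \(f \colon \mathcal V_s \to \UU_s\) employed at different filtration levels can be chosen coherently.
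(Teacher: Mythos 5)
Your proposal is correct and follows the same route as the paper, which simply invokes Lemma \ref{cofinality}: the hypothesis of Definition \ref{define-delta-filtered-cover} hands you exactly the cofinal inclusions \(\UU_t \subseteq \mathcal V_t\) for \(t \ge r\) that the lemma requires. Your extra observation--that naturality in \(t\) is automatic because the homotopy equivalences are realizations of the inclusions themselves, so the comparison squares commute on the nose--is a worthwhile point that the paper leaves implicit.
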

\begin{proof}
This is a direct consequence of Lemma \ref{cofinality}.
\end{proof}


\section{Divisive Covers} \label{sec:divisive}

In this section we discuss an algorithm to construct a
$\delta$-filtered cover of a bounded metric space \(X\). First it
divides \(X\) into two smaller sets. It 
continues by dividing the biggest of the resulting two sets into two,
and then iteratively divides the biggest of the remaining sets in
two.  

In order to describe the algorithm, we first define diameter and relative radius of a subset of a metric space. 
 \begin{definition}
Let \(X\) be a metric space and let \(Y\) be a subset of \(X\). 
\begin{enumerate}
\item The {\em diameter} of \(Y\) is defined as
  \begin{displaymath}
    d(Y) = \sup\{d(y_1,y_2) \mid y_1,y_2 \in Y\}.
  \end{displaymath}
\item 
The {\em radius} of \(Y\) relative to \(X\) is defined as 
\begin{equation*}
	r(Y) = \inf \{r > 0 \mid Y \subseteq B(x, r) \text{ for some } x \in X \}
\end{equation*}
\end{enumerate}
 \end{definition}

\begin{definition}
  A {\em \(\delta\)-division} of a subset \(Y\) of radius \(r\)
  relative to a
  bounded metric 
  space \(X\) consists of a cover \(\{Y_1, Y_2\}\) of \(Y\) consisting
  of proper subsets of \(Y\) with the
  property that for every \(y \in Y\) the intersection \(Y \cap B(y, \delta r)\)
  is contained in at least one of the sets \(Y_1\) and \(Y_2\). 
\end{definition}
\begin{definition}
Let \(X\) be a metric space. A {\em \(\delta\)-divisive cover} of \(X\) of resolution \(r \ge 0\) is a
  cover \(\UU\) of \(X\) containing \(X\) and a \(\delta\)-division
  \(\{Y_1,Y_2\}\) of every \(Y \in \UU\) of radius \(r(Y)>r\).
\end{definition}
\begin{lemma}
  Let \(\UU\) be a \(\delta\)-divisive cover of resolution \(r \ge 0\) of a bounded metric space \(X\). If every non-empty subset of
  \(\UU\) has a minimal element with respect to inclusion, then \(\UU\) is a
  \(\delta\)-filtered cover of resolution \(r\). 
\end{lemma}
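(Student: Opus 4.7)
The plan is to exhibit a filtered basis \(\mathcal V\) of \(X\) such that \(\UU_s\) is cofinal in \(\mathcal V_s\) for every \(s \ge r\). A natural candidate is
\(\mathcal V = \UU \cup \{\,B(x,t) \mid x \in X,\ 0 < t \le \delta r\,\}\): the small open balls alone already form a basis for the metric topology on \(X\), and \(X \in \UU \subseteq \mathcal V\), so \(\mathcal V\) is a filtered basis in the sense of the paper. On the \(\UU_s\)-part of \(\mathcal V_s\) the cofinality statement is immediate, so the real task is to show that every small ball \(V = B(x,t)\) with \(t \le \delta r\) is contained in some \(U \in \UU_s\).

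The idea is to walk down the divisive tree toward the point \(x\). Set \(Y_0 = X\) and, as long as \(r(Y_k) > r\), invoke the guaranteed \(\delta\)-division \(\{Y_k',Y_k''\}\) of \(Y_k\). Since \(x \in Y_k\) inductively, the intersection \(Y_k \cap B(x,\delta\, r(Y_k))\) lies inside at least one of \(Y_k'\) and \(Y_k''\), which I take to be \(Y_{k+1}\). Properness of each division makes the chain \(Y_0 \supsetneq Y_1 \supsetneq \cdots\) strictly descending inside \(\UU\). Two inductive invariants drive the argument: first \(x \in Y_k\), so each division step is legitimate; second \(B(x,\delta r) \subseteq Y_k\), because \(r(Y_k) > r\) gives \(B(x,\delta r) \subseteq B(x,\delta\, r(Y_k))\), and intersecting with \(Y_k\) (which already contains the small ball by the inductive hypothesis) dumps the whole ball into the chosen piece \(Y_{k+1}\).

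The hypothesis that every non-empty subset of \(\UU\) has a minimal element forces this strictly descending chain to be finite; let \(Y_K\) be its terminal element. No further \(\delta\)-division is available there, so necessarily \(r(Y_K) \le r\), placing \(Y_K\) in \(\UU_s\) for every \(s \ge r\); the second invariant then yields \(V = B(x,t) \subseteq B(x,\delta r) \subseteq Y_K\), completing the cofinality argument. The main obstacle is precisely the termination of this descent, and that is exactly where the minimal element hypothesis is essential; the only remaining subtlety is the mild open/closed boundary behaviour at the endpoint \(s = r\), which is routine and follows from monotonicity of \(\UU_s\) in \(s\).
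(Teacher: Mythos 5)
Your proof is correct and is essentially the paper's argument: the paper takes the minimal \(Y \in \UU\) containing \(B(x,\delta s)\) and derives a one-step contradiction from the \(\delta\)-division property, which is exactly your descent down the divisive tree with the termination supplied by the minimal-element hypothesis. Your version additionally spells out a concrete filtered basis \(\mathcal V\) (which the paper leaves implicit), at the harmless cost of the degenerate case \(r=0\), where the balls of radius at most \(\delta r\) are empty and one should instead adjoin all sufficiently small balls.
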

\begin{proof}
  Let \(x \in X\) and let \(s > r\). Let \(Y \in \UU\) be minimal
  under the condition that \(B(x, \delta s) \subseteq Y\). Suppose
  that \(r(Y)>s\) and let \(\{Y_1,Y_2\}\) be a \(\delta\)-division of
  \(Y\) contained in \(\UU\). Since \(B(x,\delta s) \subseteq
  B(x,\delta r(Y))\) we have that \(B(x,\delta s)\) is contained in
  either \(Y_1\) or \(Y_2\) and \(Y_1\) and \(Y_2\) are proper subsets
  of \(Y\). This contradicts the minimality of \(Y\).
\end{proof}
\begin{corollary}
If \(\UU\) is a finite \(\delta\)-divisive cover of \(X\), then \(\UU\) is a
\(\delta\)-filtered cover. 
\end{corollary}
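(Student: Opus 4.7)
The plan is to derive this corollary directly from the preceding lemma. The lemma already establishes that any $\delta$-divisive cover of resolution $r$ is $\delta$-filtered of the same resolution, provided every non-empty subset of $\UU$ (partially ordered by inclusion) has a minimal element. So all that remains is to verify this hypothesis under the assumption that $\UU$ is finite.

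First I would note that in any finite partially ordered set $P$, every non-empty subset $S$ has a minimal element: starting from any $a_0 \in S$, if $a_0$ is not minimal in $S$ one can pick $a_1 \in S$ with $a_1 \subsetneq a_0$, and iterating produces a strictly decreasing chain that must terminate by finiteness of $P$. Applied to $P = \UU$, this gives the hypothesis of the lemma, so the lemma yields that $\UU$ is a $\delta$-filtered cover of resolution $r$, where $r$ is the resolution of the given $\delta$-divisive cover.

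There is no real obstacle here: the content of the corollary is entirely the finite-poset observation above, and the work of relating the divisive condition to the filtered condition has already been done in the lemma. I would keep the proof to a single short paragraph, citing the lemma and the descending-chain argument for the minimality condition.
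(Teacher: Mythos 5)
Your argument is correct and is exactly the intended one: the paper leaves this corollary without an explicit proof precisely because finiteness of \(\UU\) immediately guarantees that every non-empty subset has a minimal element, so the preceding lemma applies. Your descending-chain justification fills in that (routine) step faithfully.
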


There exist many ways to construct \(\delta\)-divisions. Here is an elementary one:
\begin{lemma}\label{ellipsoiddivision}
  Let \(Y\) be a subset of a bounded metric space \(X\) and suppose that \(y_1\)
  and \(y_2\) are points in \(Y\) of maximal distance. Given \(\delta\)
  with \(0<\delta < 1/2\) let \(f = (1-2\delta)/(1+2\delta)\) and let
  \(Y_1\) consist of the points \(y \in Y\) satisfying \(f d(y, y_1)
  \le d(y, y_2)\). Similarly, let \(Y_2\) consist of the points \(y
  \in Y\)  satisfying \(f d(y, y_2) \le d(y,y_1)\). Then \(\{Y_1,Y_2\}\)
  is a \(\delta\)-division of \(Y\).   
\end{lemma}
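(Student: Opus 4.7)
The plan is to verify the three defining properties of a \(\delta\)-division in turn: that \(\{Y_1, Y_2\}\) covers \(Y\), that \(Y_1\) and \(Y_2\) are each proper subsets of \(Y\), and that for every \(y \in Y\) the set \(Y \cap B(y, \delta r)\) lies inside one of \(Y_1\) or \(Y_2\), where \(r = r(Y)\). We may assume \(r > 0\), since otherwise there is nothing to divide.

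The cover property follows by a dichotomy: for \(y \in Y\) either \(d(y, y_1) \le d(y, y_2)\), in which case \(f \cdot d(y, y_1) \le d(y, y_1) \le d(y, y_2)\) and \(y \in Y_1\), or the reverse inequality holds and by symmetry \(y \in Y_2\). For properness, \(d(y_1, y_2) > 0\) implies that \(y_2 \in Y_1 \setminus Y_2\) and \(y_1 \in Y_2 \setminus Y_1\).

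The substantive part is the local confinement condition. I would introduce the auxiliary functions \(h_1(z) = d(z, y_2) - f \cdot d(z, y_1)\) and \(h_2(z) = d(z, y_1) - f \cdot d(z, y_2)\), so that \(z \in Y_i\) iff \(h_i(z) \ge 0\). As combinations of scaled distance functions, each \(h_i\) is \((1 + f)\)-Lipschitz. A direct computation yields
\begin{displaymath}
h_1(z) + h_2(z) = (1-f)\bigl(d(z, y_1) + d(z, y_2)\bigr) \ge (1-f)\, d(y_1, y_2) \ge (1-f)\, r,
\end{displaymath}
where the last step uses \(d(y_1, y_2) = d(Y) \ge r(Y) = r\).

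Suppose for contradiction that the confinement fails at some \(y \in Y\), so there exist \(z_1, z_2 \in Y \cap B(y, \delta r)\) with \(h_1(z_1) < 0\) and \(h_2(z_2) < 0\). The Lipschitz bound forces \(h_i(y) < (1+f)\, \delta r\) for \(i = 1, 2\), hence \(h_1(y) + h_2(y) < 2(1+f)\, \delta r\). The key observation is the algebraic identity \(2(1+f)\delta = 1 - f\), which holds precisely because \(f = (1 - 2\delta)/(1 + 2\delta)\); it turns the displayed inequality into \(h_1(y) + h_2(y) < (1-f)\, r\), contradicting the lower bound above. Recognising that the value of \(f\) has been tuned exactly for this cancellation is the only nontrivial step; everything else is routine Lipschitz and triangle-inequality bookkeeping.
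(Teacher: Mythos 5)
Your argument is correct in substance, but one small bookkeeping slip first: in the properness step the roles of $y_1$ and $y_2$ are swapped. Since $Y_1=\{y: f\,d(y,y_1)\le d(y,y_2)\}$, membership of $y_2$ in $Y_1$ would require $f\,d(y_1,y_2)\le 0$, which fails; the correct statement is $y_1\in Y_1\setminus Y_2$ and $y_2\in Y_2\setminus Y_1$. Properness of both sets still follows, so this is cosmetic.

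For the substantive confinement condition your route differs genuinely from the paper's. The paper fixes a center $x$ (in fact an arbitrary $x\in X$, which is the stronger form it later needs), assumes without loss of generality $d(x,y_1)\le d(x,y_2)$, and then chains triangle inequalities directly to show every point of $Y\cap B(x,\delta r)$ satisfies $f\,d(z,y_1)\le d(z,y_2)$; the tuning of $f$ enters through $f(1+2\delta)=1-2\delta$. You instead argue by contradiction with the separating functions $h_i$, using their $(1+f)$-Lipschitz bound together with the uniform lower bound $h_1+h_2\ge(1-f)\,d(y_1,y_2)\ge(1-f)r$, and the tuning of $f$ enters through the equivalent identity $2(1+f)\delta=1-f$. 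Your version avoids the symmetry case split and identifies more transparently why $f$ has exactly this value; the paper's version has the advantage of telling you \emph{which} of $Y_1,Y_2$ swallows the ball (the one associated to the farther of $y_1,y_2$ from the center). Note also that your argument never uses that the center lies in $Y$, so like the paper's it actually proves the stronger statement for balls centered at arbitrary points of $X$, which is what the subsequent lemma on divisive covers implicitly relies on. You also verify the cover and properness clauses explicitly, which the paper omits.
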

\begin{proof}
  Let \(x \in X\) and let \(r = r(Y)\) be the relative radius of \(Y\). By
  symmetry we may without loss of generality assume that \(d(x,y_1)
  \le d(x,y_2)\).  
  We will show that if \(z \in B(x,\delta r) \cap Y\), then \(z \in
  Y_1\), that is, that \(fd(z,y_1) \le d(z,y_2)\). Since the radius of
  \(Y\) is smaller than or equal to the diameter \(d(y_1,y_2)\) of
  \(Y\) it suffices to show that \(d(x,z) \le \delta d(y_1,y_2)\)
  implies that \(fd(z,y_1) \le d(z,y_2)\). However, since 
  \[d(y_1,y_2) \le d(y_1,x) + d(x,y_2) \le 2d(x,y_2),\] 
  we have 
  \begin{align*}
    d(z,y_1) \le d(z,x) + d(x,y_1) 
    \le \delta d(y_1,y_2) + d(x,y_2) 
    \le (2\delta + 1) d(x,y_2)
  \end{align*}
  and
  \begin{align*}
    d(x,y_2) \le d(x,z) + d(z,y_2) 
    \le \delta d(y_1,y_2) + d(z,y_2) 
    \le 2\delta d(x,y_2) + d(z,y_2) 
  \end{align*}
  Since \(f = (1-2\delta)/(1+2\delta)\) this gives
  \begin{displaymath}
    fd(z,y_1) \le (1-2\delta) d(x,y_2) \le d(z,y_2).
  \end{displaymath}
\end{proof}

Given a bounded metric space \(X\), a method for \(\delta\)-division
and \(r \ge 0\), we construct in Algorithm~\ref{divisive_algorithm} a
\(\delta\)-divisive cover \(\UU^r\) of \(X\) of resolution \(r\). 
Thus the persistent homology of \((\UU^r)_{s \ge r}\) is \(\delta\)-interleaved with the persistent homology of \((\CC)_{s \ge r}\) .

\begin{algorithm} \label{divisive_algorithm}
    \SetKwInOut{Input}{Input}
    \SetKwInOut{Output}{Output}
    
	\Input{A bounded metric space \(X\), a method for \(\delta\)-division and \(r \ge 0\)}
	\Output{A \(\delta\)-divisive cover \(\UU^r\) of $X$}
	\caption{Divisive cover algorithm}
	$X_0 = X$ \\
	Create list L = $\{0\}$ \\ 
 	$i = 0$ \\
	\While {There exists a $j \in L$ such that \(r(X_j) > r\) }
		{      
		$k = \argmax_{j \in L} \{\text{diameter of \(X_j\)}\}$ \\
		Construct a \(\delta\)-division \((X_{i+1},X_{i+2})\) of \(X_k\)\\

		remove $k$ from $L$ and add \(i+1\) and \(i+2\) to \(L\) \\
				
		$i = i + 2$ 
		}
         \(\UU^r = \{ X_0, X_1, \dots , X_{i}\}\)
\end{algorithm}



\section{Complexity of the Divisive Cover Algorithm}\label{sec:complexity}

For the study of complexity of Algorithm~\ref{divisive_algorithm} we
will restrict attention to the situation where \(X\) is a finite
subset of \(\R^d\) with the
\(L^{\infty}\)-metric \(d_{\infty}\). For \(1 \le i \le d\), we let
\(\pr_i \colon \R^d \to \R\) be the coordinate projection taking
\((v_1,\dots,v_d) \in \R^d\) to \(v_i\).
\begin{lemma}[Decision division]\label{decisiondivision}
  Let \(X\) be a finite subset of \(\R^d\) equipped with the
  \(L^{\infty}\)-metric \(d_{\infty}\) and let
  \(x_1\) and \(x_2\) be points in \(X\) of maximal distance. Choose a
  coordinate projection \(\pr_i\) so that \(d_{\infty}(x_1,x_2) =
  |\pr_i(x_1 - x_2)|\). Given
  \(\delta\) with \(0<\delta < 1\) let \(X_1\) consist of the points
  \(x \in X\) satisfying \(|\pr_i(x_1-x)| \le
  \frac{1+\delta}{2}|\pr_i(x_1-x_2)|\).
  Similarly, let \(X_2\) consist of the points
  in \(x \in X\) satisfying \(|\pr_i(x_2-x)| \le
  \frac{1+\delta}{2}|\pr_i(x_1-x_2)|\). Then
  \((X_1,X_2)\) is a \(\delta\)-division of \(X\).   
\end{lemma}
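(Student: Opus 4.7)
The plan is to verify the three defining properties of a $\delta$-division: that $\{X_1, X_2\}$ covers $X$, that $X_1$ and $X_2$ are proper subsets of $X$, and that for every $y \in X$ the intersection $X \cap B(y, \delta r)$ lies entirely in one of $X_1$ or $X_2$, where $r = r(X)$ denotes the relative radius.

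I will start by normalising coordinates. Writing $t(x) = \pr_i(x)$ and $D = |\pr_i(x_1 - x_2)| = d_\infty(x_1, x_2)$, I can translate (and, if needed, reflect) along the $i$-th axis so that $t(x_1) = 0$ and $t(x_2) = D$. Since $x_1$ and $x_2$ realise the diameter of $X$, every $x \in X$ satisfies $d_\infty(x, x_1) \le D$ and $d_\infty(x, x_2) \le D$, which forces $t(x) \in [0, D]$. In this notation the definitions become
\[
X_1 = \{x \in X : t(x) \le \tfrac{1+\delta}{2} D\}, \qquad X_2 = \{x \in X : t(x) \ge \tfrac{1-\delta}{2} D\}.
\]
Covering is immediate since the two intervals together exhaust $[0, D]$, and properness follows from $\delta < 1$: the inequality $t(x_2) = D > \tfrac{1+\delta}{2} D$ shows $x_2 \notin X_1$, and symmetrically $x_1 \notin X_2$.

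For the $\delta$-division property I will case-split on where $t(y)$ falls relative to $D/2$. The decisive input is the inequality $r(X) \le D/2$, which holds specifically because we are working with the $L^\infty$-metric on $\R^d$: the projection of $X$ to each coordinate has diameter at most $D$, so the centre of the coordinate-wise bounding box is a point covering $X$ in an $L^\infty$-ball of radius $D/2$. Granted this, if $t(y) \le D/2$ then every $z \in X \cap B(y, \delta r)$ satisfies $t(z) \le t(y) + \delta r \le \tfrac{D}{2} + \delta \cdot \tfrac{D}{2} = \tfrac{1+\delta}{2} D$, placing $z$ in $X_1$; the symmetric argument for $t(y) \ge D/2$ places the intersection in $X_2$.

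The main obstacle is precisely this last step. Without the bound $r \le D/2$, a point $y$ whose projection sits near $D/2$ could have neighbours in its $\delta r$-ball whose projections straddle both $\{t > \tfrac{1+\delta}{2} D\}$ and $\{t < \tfrac{1-\delta}{2} D\}$, forcing the intersection to escape both $X_1$ and $X_2$. The half-diameter bound for axis-aligned balls in the $L^\infty$-metric is exactly the structural feature that prevents this pathology and makes decision division a valid $\delta$-division.
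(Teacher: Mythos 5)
Your proof is correct and follows essentially the same strategy as the paper's: everything is reduced to the single coordinate $\pr_i$, and the decisive fact is the bound $r(X)\le \tfrac12 D$ where $D=d_{\infty}(x_1,x_2)$. Only the final step differs, and only cosmetically: you split on whether $\pr_i(y)$ lies left or right of the midpoint, while the paper observes that the two defining intervals $[y_1,\,y_1+\tfrac{1+\delta}{2}D]$ and $[y_2-\tfrac{1+\delta}{2}D,\,y_2]$ overlap in an interval of length $\delta D=2\delta r$, so that the projection of any $\delta r$-ball, being an interval of length $2\delta r$, must land entirely in one of them. The two finishes are interchangeable. One caveat, which you share with the paper rather than introduce yourself: your justification of $r\le D/2$ takes the centre of the coordinate bounding box as the centre of the enclosing ball, but that point need not belong to $X$, whereas the paper's definition of the relative radius $r(Y)$ requires the centre to be a point of the ambient space, which in this lemma is $X$ itself. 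Under that reading $r(X)$ can strictly exceed $D/2$ (when no point of $X$ sits near the centre of the bounding box), and the inequality --- which the paper simply asserts in the stronger form $d(x_1,x_2)=2r$ without argument --- genuinely needs the ball centre to range over $\R^d$. So you are at least as rigorous as the paper here, and more explicit about where the $L^{\infty}$ structure enters; but be aware that your ``decisive input'' is exactly the point at which the statement is sensitive to which notion of radius is meant.
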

\begin{proof}
  Let \(p \in X\) and let \(r\) be the relative radius of
  \(X\). Note that \(d(x_1,x_2) = 2r\) in the situation of the
  asserted statement. We 
  have to show that the intersection of \(X\) 
  with the ball centered in \(p\) of radius \(\delta r\) is
  contained in one of \(X_1\) and \(X_2\). 
  Let us for convenience write \(y_1 = \pr_i(x_1)\) and \(y_2 =
  \pr_i(x_2)\) and let us assume that \(y_1 < y_2\).
  It suffices by
  construction to show that the interval
  \([\pr_i(p)-r,\pr_i(p)+r]\) is contained in one of the intervals
  \([y_1, y_1 + (1+\delta)(y_2-y_1)/2]\) and  
  \([y_2 -(1+\delta)(y_2-y_1)/2, y_2]\). This follows from the fact
  that the intersection \([y_2- (1+\delta)(y_2-y_1)/2,
  y_1+(1+\delta)(y_2-y_1)/2]\) of these intervals has length
  \(\delta(y_2-y_1) = 2 r \delta\). 
\end{proof}
\begin{theorem}\label{complexitythm}
  Let \(X\) be a finite subset of \(\R^d\) equipped with the
  \(L^{\infty}\)-metric \(d_{\infty}\) and let \(t > 0\). If \(X\) has
  cardinality \(n\), then the cover
  \(\mathcal V\) of \(X\) obtained from
  Algorithm~\ref{divisive_algorithm} is constructed in
  \(O({2^{kd}}dn)\) time, where \(k = \lceil 
  \log_{\frac{1+\delta}{2}} (t/r) \rceil\). The size of the cover
  \(\mathcal V\) is at most \(2^{kd}\). The nerve of \(\mathcal V\)
  can be constructed in \(O(2^{2^{kd}}dn)\) time.
\end{theorem}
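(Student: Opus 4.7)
The heart of the argument is to bound the depth of the binary splitting tree produced by Algorithm~\ref{divisive_algorithm}. Write $\mu = (1+\delta)/2 \in (1/2,1)$ and for $Y \subseteq X$ let $a_i(Y) = \max_{y,y'\in Y}|\pr_i(y-y')|$ be the extent in coordinate $i$; in the $L^\infty$-metric one has $d(Y) = \max_i a_i(Y)$. By Lemma~\ref{decisiondivision}, splitting $Y$ in its max-extent direction $i$ yields a child $Y'$ with $a_i(Y') \le \mu\, d(Y)$ and $a_j(Y') \le a_j(Y)$ for $j \neq i$ (since $Y' \subseteq Y$); in particular $d(Y') \le d(Y)$, so the diameter is non-increasing along every path from the root.

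Along a root-to-leaf path $Y_0, Y_1, \dots, Y_\ell$ write $M_j = d(Y_j)$. The key claim is $M_{j+d} \le \mu M_j$. I would prove this by case analysis on the directions $i_j,\dots,i_{j+d-1}$ split in the next $d$ steps. If they are all distinct, they exhaust the $d$ coordinates, and since the split in direction $i_m$ brings $a_{i_m}$ to at most $\mu M_m \le \mu M_j$, every coordinate extent is $\le \mu M_j$ at depth $j+d$. If instead $i_m = i_{m'}$ for some $m < m' < j+d$, then at step $m'$ direction $i_m$ must again be the current maximum, and since $a_{i_m}$ has not increased since depth $m+1$, where it became $\mu M_m \le \mu M_j$, one gets $M_{m'} \le \mu M_j$, and monotonicity yields $M_{j+d} \le M_{m'} \le \mu M_j$. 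Iterating $k$ times gives $M_{kd} \le \mu^k M_0 \le r$ by the choice of $k$ (understood as $\lceil \log_{2/(1+\delta)}(t/r) \rceil$, with $t$ an upper bound on the initial extent $M_0$). A node at depth $kd$ therefore has diameter $\le r$, hence relative radius $\le r$ (any of its own points serves as a centre), so it is a leaf; the tree has depth at most $kd$ and at most $2^{kd}$ leaves, giving the size bound on $\mathcal V$.

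For the running time, each split computes the max-distance pair and its separating coordinate in $O(d|Y|)$ time by one pass recording coordinate-wise minima and maxima, and then thresholds in the chosen coordinate in $O(|Y|)$. The crude bound $|Y| \le n$ combined with $O(2^{kd})$ splits yields the $O(2^{kd} dn)$ construction time. For the nerve, each of the $\le 2^{|\mathcal V|} \le 2^{2^{kd}}$ candidate simplices can be tested for nonempty common intersection by scanning the $n$ points of $X$ and performing an $O(d)$ box-membership check, giving the stated $O(2^{2^{kd}} dn)$ bound. The step I expect to be the main obstacle is the depth bound $M_{j+d} \le \mu M_j$: the greedy rule does not force different coordinates to be chosen at consecutive steps, so the case where a direction is re-split before all $d$ coordinates have been touched requires the observation that the repeat split certifies the current maximum has already descended to $\mu M_j$.
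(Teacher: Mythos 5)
Your proof follows essentially the same route as the paper's: in the $L^\infty$-metric the radius is the maximum of the coordinate extents, each decision division shrinks the currently maximal extent by the factor $(1+\delta)/2$, so $d$ consecutive divisions shrink the radius by that factor, giving tree depth $kd$, cover size $2^{kd}$, $O(nd)$ work per split, and brute-force intersection testing for the nerve. Your case analysis for the key step $M_{j+d}\le \mu M_j$ (handling the possibility that the greedy rule re-splits a coordinate before all $d$ directions have been touched) is correct and in fact supplies a justification that the paper merely asserts in one line; the only discrepancy is notational, since you read $t$ as the initial extent and $r$ as the target resolution while the paper's proof uses the opposite convention ($r$ the radius of $X$, $t$ the threshold), which is why your logarithm base is the reciprocal of the paper's — the resulting value of $k$ is the same.
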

Note that for fixed \(d\) and \(\delta\), the term \(2^{kd}\) is
polynomial in the ratio \(r/t\) between the radius \(r\) of \(X\) and
the threshold radius \(t\).

Let \(\mathcal V\) be as in Theorem~\ref{complexitythm}.
Given \(s \ge t\) we write \({\mathcal V}_s\) for the
cover of \(X\) given by members of \(\mathcal V\) of radius less than
\(s\). By construction,  for \(s \ge t\), the inclusion of \({\mathcal V}_s\) in 
\(\UU_s\) is cofinal. Thus 
by Lemma \ref{cofinality}, for filtration values
greater than $t$, the persistent homology of the cover \(\mathcal V\)
coincides with the persistent homology of \(\UU\). 

\begin{proof}[Proof of Theorem~\ref{complexitythm}]
Note that in
the \(L^{\infty}\)-metric, the radius of a subset of \(\R^d\) is given
by the maximum of the radii of its coordinate projections to
\(\R\). 
A \(\delta\)-decision division (\ref{decisiondivision}) reduces the
radius of this coordinate projection 
by the factor \(\frac{1+\delta}{2}\). Thus the radius of any \(d\)-fold  \(\delta\)-divided part of
\(X\) is at most \(r \left(\frac{1+\delta}{2}\right)\), where \(r\) is
the radius of \(X\).
If we let \(k = \lceil \log_{\frac{1+\delta}{2}} (t/r)
\rceil\), then the radius of any \({kd}\)-fold \(\delta\)-divided part of \(X\)
is at most \(r(\frac{1+\delta}{2})^k \le t\).
Since 
each \(\delta\)-decision division consists of two parts, we conclude that
\(\mathcal V\) can be produced 
by making at most
\(2^{kd}\) \(\delta\)-decision divisions. 
Since we work in the \(L^{\infty}\) metric, extremal points can be
found by computing min- and max-values for
the coordinate projections of points in \(X\).
Similarly \(\delta\)-decision division can be made by computing min- and max-values for
the coordinate projections of points in \(X\). Each of these steps require \(O(nd)\)
time, so the cover is of size at most \(2^{kd}\) and it can be
constructed in \(O(2^{kd}nd)\) time.

Finally, the nerve of the cover \(\mathcal V\) 
is constructed by calculating intersections
of members of \(\mathcal V\). Calculating the intersection of \(i \le d\)
subsets of \(X\) can be done by, for each element \(x\) of 
\(X\), deciding if \(x\) is a member of the intersection. The
complexity of this is \(O(ni)\). Since
the cardinality of \(\mathcal V\) is at most \(2^{kd}\), independently of
\(n\), the time of calculating the nerve is \(O(2^{2^{kd}}n)\).
\end{proof}

We shall use the following result to show that a \(\delta\)-decision
division of \(X \subseteq \R^d\) gives a \(d^{-1/p} \delta\)-divisive
cover of \(X\) in the \(L^{p}\)-metric.
This stems from the fact that all \(L^{p}\)-metrics are equivalent.  

\begin{proposition}\label{squeeze}
Let \(d_1\) and \(d_2\) be metrics on \(X\) and let
\(\alpha\) and \(\beta\) be positive numbers such that for all \(x, y
\in X\) the 
inequality 
\begin{equation*}
\alpha d_1(x, y) \le d_2(x, y) \le \beta d_1(x, y)
\end{equation*}
holds. 
Then every \(\delta\)-filtered cover of \((X, d_1)\) is a 
\(\delta \alpha/\beta\)-filtered cover of \((X, d_2)\).
\end{proposition}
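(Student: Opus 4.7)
The plan is to transport ball inclusions between the two metrics using $\alpha d_1\le d_2\le\beta d_1$ and then apply the $\delta$-filtered basis condition of $(X,d_1)$ at a suitably shifted scale.

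First I would record the ball inclusions
\[
B_2(x,\alpha s)\;\subseteq\;B_1(x,s)\;\subseteq\;B_2(x,\beta s),
\]
which follow immediately from the metric comparison, where $B_i(x,r)$ denotes the open ball of radius $r$ in $d_i$. Rewriting, $B_1(x,s/\beta)\subseteq B_2(x,s)$ and $B_2(x,s)\subseteq B_1(x,s/\alpha)$. Writing $\mathcal W^{(i)}_t$ for the $t$-th level of the filtration of a cover $\mathcal W$ computed in $d_i$, these upgrade to filtration inclusions $\mathcal W^{(1)}_{t/\beta}\subseteq\mathcal W^{(2)}_t\subseteq\mathcal W^{(1)}_{t/\alpha}$ applied either to $\UU$ or to its witnessing basis $\mathcal V$.

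The heart of the argument is the basis case. Suppose $\UU$ is a $\delta$-filtered basis of $(X,d_1)$. Given $x\in X$ and $r>0$, I would apply the $d_1$-hypothesis at scale $r/\beta$ to produce $A\in\UU^{(1)}_{r/\beta}$ with $B_1(x,\delta r/\beta)\subseteq A$. The filtration inclusion of the previous paragraph places $A$ in $\UU^{(2)}_r$, and the ball inclusion $B_2(x,\delta\alpha r/\beta)\subseteq B_1(x,\delta r/\beta)$ shows that $A$ contains $B_2(x,(\delta\alpha/\beta)r)$. This is precisely the $(\delta\alpha/\beta)$-filtered basis condition for $(X,d_2)$.

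For a $\delta$-filtered cover $\UU$ in the sense of Definition \ref{define-delta-filtered-cover}, the witnessing basis $\mathcal V$ becomes a $(\delta\alpha/\beta)$-filtered basis of $(X,d_2)$ by the basis case, and the cofinality of $\UU^{(1)}_s$ in $\mathcal V^{(1)}_s$ transports through the same sandwich: given $V\in\mathcal V^{(2)}_s$ one has $V\in\mathcal V^{(1)}_{s/\alpha}$, and choosing a cofinal $U\in\UU^{(1)}_{s/\alpha}\subseteq\UU^{(2)}_{\beta s/\alpha}$ gives the required cofinality after rescaling the resolution by the constant factor $\beta/\alpha$. The only real obstacle I foresee is bookkeeping: keeping the factors of $\alpha$ and $\beta$ aligned as they cascade through balls, filtration levels, and resolution. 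Once that is done, the proposition reduces to the clean one-line computation in the basis case.
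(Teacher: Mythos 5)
Your argument is correct and is essentially the paper's own proof: the same ball inclusions \(B_{d_1}(x,t/\beta)\subseteq B_{d_2}(x,t)\subseteq B_{d_1}(x,t/\alpha)\), the same induced inclusions of filtration levels, and the same application of the \(\delta\)-filtered condition at scale \(r/\beta\) followed by \(B_{d_2}(x,\delta\alpha r/\beta)\subseteq B_{d_1}(x,\delta r/\beta)\subseteq A\). The paper stops at what you call the basis case, reading the hypothesis as the condition of Definition~\ref{def:delta-filtered-cover}; your additional paragraph on transporting cofinality only places the enlarging set in \(\UU^{(2)}_{\beta s/\alpha}\) rather than in \(\UU^{(2)}_{s}\), so it does not quite verify Definition~\ref{define-delta-filtered-cover} at level \(s\), but that step is an addition of yours and is not part of the paper's proof.
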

\begin{proof}
We emphasize the metrics \(d_1\) and \(d_2\) in the notation by
writing \(\UU^{d_1}_t\) and \(\UU^{d_2}_t\) for the covers of \(X\) consisting of members
of \(\UU\) 
contained in a closed ball of radius \(t\) in \((X, d_1)\) and \((X, d_2)\) respectively. 

By assumption, there are inclusions of balls
\begin{equation*}
B_{d_1}\left(x, t/\beta \right) \subseteq 
B_{d_2}\left(x, t \right) \subseteq
B_{d_1}\left(x, t/\alpha \right), 
\end{equation*}
so
\begin{equation*}
\UU^{d_1}_{t/\beta} \subseteq 
\UU^{d_2}_{t} \subseteq
\UU^{d_1}_{t/\alpha}.
\end{equation*}

Given a point \(x \in X\) and a radius \(t > 0\), we can find a set
\(A \in \UU^{d_1}_{t/\beta}\) such that \(B_{d_1}(x,
t \delta /\beta) \subseteq A\) since  \(\UU\) is
\(\delta\)-filtered in \((X, d_1)\). Due to the above inclusions,
\(A\) is also in \(\UU^{d_2}_{t}\) and \(B_{d_2}(x, t \delta
\alpha/\beta) \subseteq B_{d_1}(x, t \delta /\beta)
\subseteq A\). Thus \(\UU\) is an \(\delta \alpha/\beta\)-filtered
cover of \((X,d_2)\).
\end{proof}

In the case where \(d_1\) is the \(L_{\infty}\)-metric and \(d_2\) is the
\(L_p\)-metric on \(\R^d\) the inequalities in
Proposition~\ref{squeeze} hold for \(\alpha = 1\) and
\(\beta = d^{1/p}\). Thus, if \(\UU\) is a \(\delta\)-filtered cover of
\(X\) with respect to the \(L_{\infty}\)-metric, then it is a
\(d^{-1/p}\delta\)-filtered cover of \(X\) with respect to the
\(L_p\)-metric. In particular it is \(\delta/\sqrt{d}\)-filtered with
respect to the Euclidean metric.


\section{Examples} \label{sec:examples}

\subsection{Generated data}

\subsubsection{Sphere}
We used divisive cover with the \(\delta\)-division of Lemma \ref{ellipsoiddivision}
to calculate the persistent homology of a generated sphere. We
generated 1000 data points with a radius normally distributed with a
mean of 1 and a standard deviation of 0.1 and uniform angle. The top
panel of Figure~\ref{figure} shows the resulting persistence barcodes.  

\subsubsection{Torus}
We calculated the persistent homology of a generated torus using
divisive cover with the \(\delta\)-division of Lemma \ref{ellipsoiddivision}. We generated 400 data points on a torus. The torus was
generated as the product space of 20 points each on two circles of
radius 1 with uniformly distributed angles. The second panel of
Figure~\ref{figure} shows the  persistence barcodes of the generated
torus. 

\begin{figure} \label{figure}
\includegraphics[width = \textwidth]{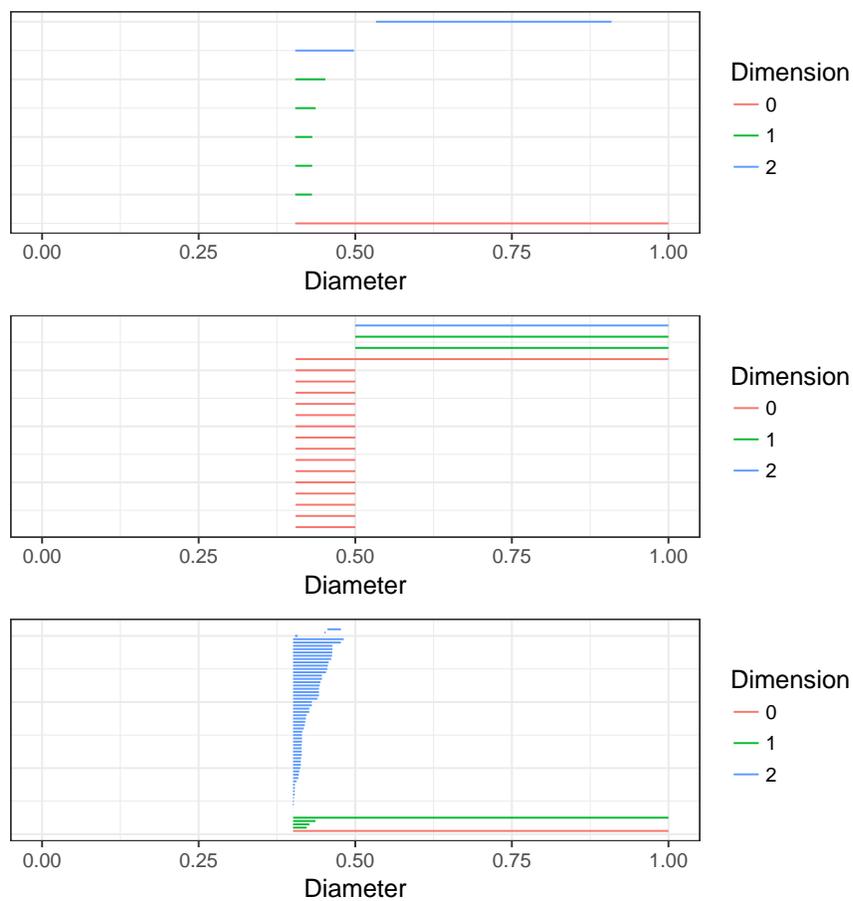}
\caption{Persistence barcodes using divisive cover. All barcodes are
  shown for relative diameter between \(0.4\) and \(1\). The first
  panel shows the persistence barcodes of a sphere using divisive
  cover with $\delta = 0.05$ and the second panel shows persistence
  barcodes of a torus with $\delta = 0.06$. The third panel shows
  persistence barcodes of $X(300, 30)$, with $\delta = 0.025$} 
\end{figure}

\subsection{Natural images}
The space of 3 by 3 high-contrast patches of natural images has been
analysed using witness complexes before \citep{Carlsson2008}. The
authors analysed high-density subsets of 50,000 random 3 by 3 patches
from a collection of $4 \times 10^6$ patches presented in \cite{Hateren1998}. They
denote the space $X(k, p)$ of $p$ percent highest density patches
using the $k$-nearest neighbours to estimate density and find that
$X(300,30)$ has the topology of a circle. We repeat this analysis
using divisive cover with the \(\delta\)-division of Lemma
\ref{ellipsoiddivision} and show that calculating persistent homology
without landmarks is possible for real world data sets. The bottom
panel of Figure~\ref{figure} show the persistence barcodes of $X(300,
30)$.  


\section{Conclusion} \label{sec:conclusion}

Filtered covers as the underlying structure for filtered complexes
provides new insights into topological data analysis. It can be used
as a basis for new constructions of simplicial complexes that are
interleaved with the {\v{C}}ech nerve. We are not aware of any
previous literature that made use of covers in such a way. Divisive
covers are just one possible way to create \(\delta\)-filtered
covers. Many other constructions are available, for example optimized
versions of the \(\delta\)-filtered {\v{C}}ech cover we have
presented.

The idea of a divisive cover is conceptually simple and easy to
implement. Compared to the {\v{C}}ech nerve, the nerve of a divisive
cover can be substantially smaller. On the other hand, the witness
complex is often considerably smaller than the divisive cover complex.
Although we give theoretical guarantees that are linear in \(n\), in
practice persistent homology calculations using the divisive cover algorithm
proposed here are not competitive with state of the art
approximations to the Vietoris-Rips complex \cite{MR3394707,simba}.
We see divisive covers as a new class of simplicial complexes that can
be studied in a fashion similar to Vietoris-Rips filtrations. It is
possible to reduce the size of the divisive cover complex, for example
by using landmarks.  We did not address such improvements in the
present paper. It might also be possible to combine a version of the
Vietoris-Rips complex for low filtration values and a version of the
divisive cover complex for high filtration values. The version of
divisive cover we have presented is easy to implement and performs
well at large filtration values.


\section*{Acknowledgements}
This research was supported by the Research Council of Norway through grant 248840. 


\bibliographystyle{plainnat}

\begin{thebibliography}{13}
\providecommand{\natexlab}[1]{#1}
\providecommand{\url}[1]{\texttt{#1}}
\expandafter\ifx\csname urlstyle\endcsname\relax
  \providecommand{\doi}[1]{doi: #1}\else
  \providecommand{\doi}{doi: \begingroup \urlstyle{rm}\Url}\fi

\bibitem[Carlsson et~al.(2008)Carlsson, Ishkhanov, de~Silva, and
  Zomorodian]{Carlsson2008}
Gunnar Carlsson, Tigra Ishkhanov, Vin de~Silva, and Afra Zomorodian.
\newblock On the local behavior of spaces of natural images.
\newblock \emph{International Journal of Computer Vision}, 76\penalty0
  (1):\penalty0 1--12, 2008.
\newblock \doi{10.1007/s11263-007-0056-x}.

\bibitem[Chazal and Lieutier(2005)]{Chazal2005304}
Fr{\'e}d{\'e}ric Chazal and Andr{\'e} Lieutier.
\newblock The "$\lambda$-medial axis".
\newblock \emph{Graphical Models}, 67\penalty0 (4):\penalty0 304 -- 331, 2005.
\newblock \doi{10.1016/j.gmod.2005.01.002}.

\bibitem[Chazal and Oudot(2008)]{Chazal2007}
Fr\'ed\'eric Chazal and Steve~Y. Oudot.
\newblock Towards persistence-based reconstruction in {E}uclidean spaces.
\newblock In \emph{Computational {G}eometry ({SCG}'08)}, pages 232--241. ACM,
  New York, 2008.
\newblock \doi{10.1145/1377676.1377719}.

\bibitem[Cohen-Steiner et~al.(2005)Cohen-Steiner, Edelsbrunner, and
  Harer]{MR2460372}
David Cohen-Steiner, Herbert Edelsbrunner, and John Harer.
\newblock Stability of persistence diagrams.
\newblock In \emph{Computational {G}eometry ({SCG}'05)}, pages 263--271. ACM,
  New York, 2005.
\newblock \doi{10.1145/1064092.1064133}.

\bibitem[Dey et~al.(2016)Dey, Shi, and Wang]{simba}
Tamal~K. Dey, Dayu Shi, and Yusu Wang.
\newblock {SimBa}: An efficient tool for approximating {R}ips-filtration
  persistence via simplicial batch-collapse.
\newblock \emph{CoRR}, abs/1609.07517, 2016.

\bibitem[Edelsbrunner et~al.(2000)Edelsbrunner, Letscher, and
  Zomorodian]{Edelsbrunner2000}
Herbert Edelsbrunner, David Letscher, and Afra Zomorodian.
\newblock Topological persistence and simplification.
\newblock In \emph{41st {A}nnual {S}ymposium on {F}oundations of {C}omputer
  {S}cience ({R}edondo {B}each, {CA}, 2000)}, pages 454--463. IEEE Comput. Soc.
  Press, Los Alamitos, CA, 2000.
\newblock \doi{10.1109/SFCS.2000.892133}.

\bibitem[Hatcher(2002)]{MR1867354}
Allen Hatcher.
\newblock \emph{Algebraic topology}.
\newblock Cambridge University Press, Cambridge, 2002.

\bibitem[Hateren and Schaaf(1998)]{Hateren1998}
J.~H.~van Hateren and A.~van~der Schaaf.
\newblock Independent component filters of natural images compared with simple
  cells in primary visual cortex.
\newblock \emph{Proceedings: Biological Sciences}, 265\penalty0
  (1394):\penalty0 359--366, Mar 1998.
\newblock \doi{10.1098/rspb.1998.0303}.

\bibitem[Hudson et~al.(2010)Hudson, Miller, Oudot, and Sheehy]{Hudson}
Beno{\^\i}t Hudson, Gary~L. Miller, Steve~Y. Oudot, and Donald~R. Sheehy.
\newblock Topological inference via meshing.
\newblock In \emph{Computational {G}eometry ({SCG}'10)}, pages 277--286. ACM,
  New York, 2010.
\newblock \doi{10.1145/1810959.1811006}.

\bibitem[Lee(2011)]{MR2766102}
John~M. Lee.
\newblock \emph{Introduction to topological manifolds}, volume 202 of
  \emph{Graduate Texts in Mathematics}.
\newblock Springer, New York, second edition, 2011.
\newblock \doi{10.1007/978-1-4419-7940-7}.

\bibitem[Oudot and Sheehy(2015)]{MR3394707}
Steve~Y. Oudot and Donald~R. Sheehy.
\newblock Zigzag zoology: {R}ips zigzags for homology inference.
\newblock \emph{Found. Comput. Math.}, 15\penalty0 (5):\penalty0 1151--1186,
  2015.
\newblock \doi{10.1007/s10208-014-9219-7}.

\bibitem[Sheehy(2013)]{Sheehy2013}
Donald~R. Sheehy.
\newblock Linear-size approximations to the {V}ietoris-{R}ips filtration.
\newblock \emph{Discrete Comput. Geom.}, 49\penalty0 (4):\penalty0 778--796,
  2013.
\newblock \doi{10.1007/s00454-013-9513-1}.

\bibitem[Spanier(1966)]{Spanier}
Edwin~H. Spanier.
\newblock \emph{Algebraic topology}.
\newblock Springer-Verlag, New York, 1966.
\newblock Corrected reprint of the 1966 original.

\end{thebibliography}

\end{document}